\documentclass[11pt]{amsart}
\usepackage{enumerate}
\newtheorem{lema}{Lemma}[section]
\newtheorem{theo}[lema]{Theorem}
\newtheorem{prop}[lema]{Proposition}
\newtheorem{coro}[lema]{Corollary}

\newtheorem{rema}[lema]{Remark}
\newtheorem{conj}[lema]{Conjecture}
\usepackage{graphicx}
\begin{document}
\title[]
{Polynomial differential equations with piecewise linear coefficients}
\author[]
{M. A. M. Alwash}
\address{Department of Mathematics, West Los Angeles College
\newline
9000 Overland Ave, Culver City, CA 90230-3519, USA}
\email{alwashm@wlac.edu}
\thanks{
 2000 Mathematics Subject Classification: 34C25, 34C07, 34C08, 37G10, 13P10.
 \newline
 Key words and phrases: Polynomial differential equations, Multiplicity, Center, Gr\"{o}bner Basis.}
\numberwithin{equation}{section}
\begin{abstract}
Cubic and quartic non-autonomous differential equations with continuous piecewise linear coefficients are considered.
 The main concern is to find the maximum possible multiplicity of periodic solutions. For many classes, we show that the multiplicity is the same when the coefficients are polynomial functions of degree $n$, or piecewise linear functions with $n$ segments.
\end{abstract}
\maketitle

\section{Introduction}
Consider the Abel differential equation
\begin{equation}
\dot{z}=\frac{dz}{dt}=A(t)\,z^{3}+B(t)\,z^{2},
\end{equation}
where $z$ is real and $A(t),B(t)$ are continuous functions.
Let $z(t,c)$ be the solution that satisfies the initial condition $z(0,c)=c$.
   A solution $\varphi$ is {\emph periodic} if it satisfies the boundary condition $\varphi(0)=\varphi(1)$. The equation has a \emph{center} at $z=0$ if there exists an open interval $I$ containing $0$ such
  $z(t,c)$ is periodic for all $c$ in $I$. The concept is related to the classical center problem of polynomial two-dimensional systems, see \cite{A5}. Several research articles were published in the last twenty five years to find conditions which are necessary and sufficient for the existence of a center, see \cite{Y1}. The displacement function $q$ is defined by
\[
q(c)=z(1,c)-c.
\]
Zeros of $q$ identify initial points of solutions of periodic solutions.
Note that $q$ is a holomorphic function defined on an open set containing the origin. The \emph{multiplicity} of a periodic solution $\varphi$ is that of $\varphi (0)$ as a zero of $q$. In the neighborhood of $z=0$, we can write
\begin{equation}
z(t,c)=\sum_{n=1}^{\infty}a_{n}(t)\,c^{n},
\end{equation}
for $0 \leq t \leq 1$; the $a_{n}(t)$ are continuous and satisfy the following initial conditions.
\[a_{1}(0)=1,a_{n}(0)=0,\,\,n>1.\]
The multiplicity of the origin is $k$, $k \geq 2$, if and only if
\[a_{i}(1)=0,i=2,\cdots,k-1,a_{k}(1) \neq 0.\]
Moreover, $z=0$ is stable when $a_{k}(1)<0$, and it is unstable when $a_{k}(1)>0$.
  The Abel differential equation has a center at the origin if and only if
\[a_{k}(1)=0, k\geq 2.\]
The functions $a_{n}(t)$ satisfy the equations
\[a_{1}(t)\equiv 1,\]
and
\begin{equation}
\dot{a}_{n}=A \,\sum_{i+j+k=n}^{}a_{i}\,a_{j}\,a_{k}+B \,\sum_{i+j=n}^{} a_{i}\,a_{j}.
\end{equation}
Formulae for the $a_{n}(t)$, in terms of $A(t)$ and $B(t)$, were derived in \cite{A5} for $2 \leq n \leq 8$.

Now, we consider the quartic differential equation
\begin{equation}
\dot{z}=\frac{dz}{dt}=z^{4}+A(t)z^{3}+B(t)z^{2},
\end{equation}
where $z$ is real and $A(t),B(t)$ are continuous functions. With the same definition of multiplicity, the formulae for $a_{n}(t)$ becomes:
\begin{equation}
\dot{a}_{n}=\sum_{i+j+k+l=n}^{} a_{i}\,a_{j}\,a_{k}\,a_{l}+A\, \sum_{i+j+k=n}^{}a_{i}\,a_{j}\,a_{k}+B\, \sum_{i+j=n}^{} a_{i}\,a_{j}.
\end{equation}
Formulae for the $a_{n}(t)$, in terms of $A(t)$ and $B(t)$, were derived in \cite{A4} for $2 \leq n \leq 8$.

The class of equations (1.4) has received some attention in the literature. The main concern is to estimate the number of periodic solutions. The qualitative behavior of the
solution curves depends entirely on the periodic solutions; see, for example, \cite{P1}. The problem was suggested by C. Pugh
as a version of Hilbert's sixteenth problem; it is listed as Problem 7 by Steve Smale in \cite{S1}. Equations of the form (1.3), have been studied in \cite{L1} and \cite{P1} using the methods of complex analysis and topological dynamics. The variable $z$ was assumed to be complex.
The reason is that periodic solutions cannot then
be destroyed by small perturbations of the right-hand side of the equation. Suppose that $\varphi$ is a periodic
solution of multiplicity $k$. This solution is counted as $k$ solutions.
By applying Rouche's theorem to the function $q$, for any sufficiently small
perturbations of the equation, there are precisely $k$ periodic solutions in a neighborhood of $\varphi$ (counting
multiplicity). On the other hand, upper bounds to the number of periodic solutions of equation (1.4) can be used as upper bounds to the
number of periodic solutions when $z$ is limited to be real-valued.
This is the reason that the coefficients are not allowed to be complex-valued.
The results presented in \cite{L1} could be used for equations with piecewise linear coefficients; the coefficients in \cite{L1} are only required to be continuous.

The equation (1.4) was considered in \cite{A2} and \cite{A4}. The main concern was the multiplicity of
$z=0$ when the coefficients are polynomial functions in $t$, and in $\cos{t}$ and $\sin{t}$. Equations with
at least $10$ real periodic solutions were constructed. These periodic solutions are bifurcated from a periodic solution of
multiplicity $10$. In this paper, we consider the case in which $A(t)$ and $B(t)$ are continuous piecewise linear functions.

T simplify the presentation, we introduce the functions\\
{\bf Definition.}
$\mu_{1}(m,n)=$ Maximum\,\{multiplicity of $z=0$ when $B(t)$ and $A(t)$ are polynomial functions of degree $m$ and $n$, respectively\}.\\
$\mu_{2}(m,n)=$
maximum\,\{multiplicity of $z=0$ when $B(t)$ and $A(t)$ are continuous piecewise linear functions, with $m$ and $n$ segments,  respectively\}.\\
For $\mu_{2}$ the segments are connected at $\frac{k}{n},\,k=1,2,\cdots\,n-1$.

In the next section, we consider the cubic equation. First, sufficient conditions for the existence of a center at $z=0$ are given. Then we prove the following result.
\begin{theo}
For the equation (1.1),
\begin{enumerate}
  \item $\mu_{1}(2,2)=\mu_{2}(2,2)=4$,
  \item $\mu_{1}(2,3)=\mu_{2}(2,3)=8$,
  \item $\mu_{1}(1,2)=\mu_{1}(1,3)=\mu_{2}(1,2)=\mu_{2}(1,3)=4,\\
  \mu_{1}(1,4)=\mu_{2}(1,4)=\mu_{1}(1,5)=\mu_{2}(1,5)=5,\\
      \mu_{1}(1,6)=\mu_{2}(1,6)=10,\,\mu_{1}(1,7)=\mu_{2}(1,7)=11 $.
\end{enumerate}
\end{theo}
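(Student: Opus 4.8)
The plan is to reduce the entire statement to a sequence of commutative-algebra computations on the focal values $\eta_n := a_n(1)$. Using $a_1 \equiv 1$ together with the recurrence (1.3), one integrates successively to obtain $a_2(t) = \int_0^t B$, then $\dot a_3 = A + 2Ba_2$, $\dot a_4 = 3Aa_2 + B(2a_3 + a_2^2)$, and so on; each $a_n(t)$ is an explicit iterated integral of products of $A$ and $B$, and $\eta_n = a_n(1)$ becomes a polynomial in the defining parameters of $A$ and $B$. In the polynomial case I write $B(t)=\sum_{i=0}^m b_i t^i$ and $A(t)=\sum_{j=0}^n \alpha_j t^j$, so that the $\eta_n$ are polynomials in the $m+n+2$ unknowns $b_i,\alpha_j$. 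In the piecewise-linear case, since the breakpoints of $A$ are fixed at $\tfrac{k}{n}$, I represent $A$ in the basis $1,\,t,\,(t-\tfrac1n)_+,\dots,(t-\tfrac{n-1}{n})_+$ and $B$ analogously with $m$ segments; this again gives exactly $n+1$ free parameters for $A$ and $m+1$ for $B$, and the iterated integrals, though now piecewise polynomial in $t$, still evaluate at $t=1$ to honest polynomials $\eta_n$ in the node parameters. This coincidence in the number of parameters is what makes the equality $\mu_1=\mu_2$ plausible, but it must be verified rather than merely counted.

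For each individual equality I would argue the lower and upper bounds separately. For the lower bound on, say, $\mu(2,3)=8$, I would exhibit explicit parameter values for which $\eta_2=\cdots=\eta_7=0$ while $\eta_8\neq 0$; such a witness produces a solution of multiplicity $8$, and I would provide one in each of the two families. For the upper bound I would show that the simultaneous vanishing $\eta_2=\cdots=\eta_K=0$, with $K$ the claimed maximum, already forces a center, so that no finite multiplicity exceeding $K$ can occur. Concretely, I would compute a Gr\"obner basis of the ideal $I_K=\langle \eta_2,\dots,\eta_K\rangle$, decompose $V(I_K)$ into its irreducible components, and check on each component that the sufficient center conditions established just before the theorem are satisfied; equivalently, I would verify by radical-membership tests that $\eta_{K+1},\eta_{K+2},\dots$ all lie in $\sqrt{I_K}$. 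Together with the witness, this pins the maximal finite multiplicity at exactly $K$.

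The equality $\mu_1=\mu_2$ then follows by carrying this program through in both parametrizations and observing that the resulting maxima agree. In several cases the two computations can be linked directly: once the center conditions of composition or symmetry type are imposed, the iterated integrals produced by the hat-function basis satisfy the same vanishing relations at $t=1$ as those produced by the monomial basis, so the two center varieties coincide and hence the first index $K$ at which $V(I_K)$ collapses onto the center locus is the same.

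The main obstacle will be the upper-bound step in the high-multiplicity cases, in particular $\mu(2,3)=8$ and $\mu(1,6)=10$, $\mu(1,7)=11$. There the ideals $I_K$ live in as many as ten variables and the focal values $\eta_n$ grow rapidly in degree, so the Gr\"obner basis and radical-membership computations are delicate: one must choose a favorable monomial order and exploit the simplifications that follow from $\eta_2=0$ (which kills $\int_0^1 B$) to keep the computation feasible, and above all one must be certain that $V(I_K)$ is genuinely the whole center variety and not a proper subvariety on which some higher $\eta_{K+1}$ still fails to vanish identically.
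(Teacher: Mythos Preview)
Your plan is essentially the paper's own approach: compute the focal values as polynomials in the coefficient parameters, form Gr\"obner bases of the successive ideals, and show that vanishing at the claimed level forces one of the known sufficient center conditions (composition, or the symmetry of Corollary~2.4). The paper's only substantive additions are computational devices you do not mention---it replaces the nonlinear recursion (1.3) for $a_n$ by the \emph{linear} recursion (2.2)--(2.3) for the inverse-Poincar\'e coefficients $V_k$, and in part~(3) it normalizes $B(t)=2t-1$ via $z\mapsto z/u$ and writes polynomial $A$ in the basis $\{(t^2-t)^i,\ (2t-1)(t^2-t)^i\}$, both of which shrink the Gr\"obner computations enough to make the $\mu=10,11$ cases tractable.
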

The quartic equation is considered in Section 3. We prove our second result.
\begin{theo}
For the equation (1.4),
\begin{enumerate}
  \item $\mu_{1}(2,2)=\mu_{2}(2,2)=8$,
  \item $\mu_{1}(2,3)=\mu_{2}(2,3)=10$,
  \item $\mu_{1}(1,2)=\mu_{1}(1,3)=\mu_{2}(1,2)=\mu_{2}(1,3)=5,\\\mu_{1}(1,4)=\mu_{2}(1,4)=9,\,\mu_{1}(1,5)=\mu_{2}(1,5)=10$.
\end{enumerate}
\end{theo}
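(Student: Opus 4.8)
The plan is to reduce each equality to two algebraic statements about the focal values $a_k(1)$ and then to carry out the corresponding computations in both parameterizations. Following the development of Section~2, I would first record explicit formulae for $a_2(t),\ldots,a_{10}(t)$ for the quartic equation (1.4); those for $2\le n\le 8$ are available from \cite{A4}, and $a_9,a_{10}$ are obtained by integrating (1.5) in the same manner. Substituting the chosen coefficient families into these formulae turns each $a_k(1)$ into a polynomial in the free parameters: for $\mu_1(m,n)$ these are the $m+n+2$ coefficients of $B$ and $A$, while for $\mu_2(m,n)$ they are the $m+n+2$ nodal values of the piecewise linear functions, the segment integrals being evaluated explicitly on each subinterval determined by the prescribed nodes, with the continuity of each $a_k$ across nodes enforced.

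The key observation is that the maximal multiplicity is the integer $K$ characterized by the two conditions that (i) the variety $V(a_2(1),\ldots,a_{K-1}(1))$ contains a point at which $a_K(1)\neq 0$, and (ii) the variety $V(a_2(1),\ldots,a_K(1))$ consists entirely of centers, i.e.\ points at which every $a_k(1)$ vanishes. Condition (i) is the lower bound, established by solving the system $a_2(1)=\cdots=a_{K-1}(1)=0$ and selecting a solution at which $a_K(1)\neq 0$; exhibiting such coefficients proves that multiplicity $K$ is attained. Condition (ii) is the upper bound. To prove it I would extend the sufficient center conditions of Section~2 to the quartic case, compute a Gr\"obner basis of the ideal $I=\langle a_2(1),\ldots,a_K(1)\rangle$, decompose $V(I)$ into its components, and verify that on each component one of the sufficient center conditions holds. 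Since the intro notes that a complex upper bound dominates the real count, this may be carried out over the complexification without weakening the conclusion.

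Carrying out this scheme for the polynomial family gives the values of $\mu_1$, and repeating it verbatim for the piecewise linear family gives the values of $\mu_2$; the equalities $\mu_1=\mu_2$ then emerge from comparing the two computations. The structural reason I expect behind this coincidence is that in both parameterizations the number of free parameters is the same ($m+n+2$), and each $a_k(1)$ depends on $A$ and $B$ only through a fixed finite collection of iterated integrals of monomials; after imposing the lower focal values, the next one reduces to a moment-type functional, and both polynomials of degree $n$ and piecewise linear functions with $n$ segments realize the same range of the relevant moments through the order needed for multiplicity $K$. A linear change of parameters then identifies the two lists of focal values up to that order, so that the integer $K$ produced by (i)--(ii) is the same; in practice I would confirm the equality case by case by matching the primary decompositions of the two ideals.

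The main obstacle is the upper bound (ii). The Gr\"obner basis and primary decomposition computations are heavy for the larger cases, notably $\mu(2,2)=8$, $\mu(2,3)=10$ and $\mu(1,5)=10$, and they require a careful choice of monomial order and of working over the rationals with denominators cleared to remain feasible. More seriously, a component of $V(I)$ need not be visibly of the form covered by a sufficient center condition; in that case one must certify the center directly by showing that the remaining focal values, in particular $a_9(1)$ and $a_{10}(1)$, vanish identically along the component. This reduction is the delicate step, and it is what guards against the genuine danger that some component is a high-order focus rather than a center, which would make the true maximal multiplicity exceed the stated value; it is precisely here that the explicit formulae for $a_9(t)$ and $a_{10}(t)$ are indispensable.
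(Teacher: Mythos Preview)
Your plan contains a genuine conceptual gap that would derail the upper-bound argument. You propose to show that the variety $V(a_2(1),\ldots,a_K(1))$ consists entirely of centers, and you worry about extending the sufficient center conditions of Section~2 to the quartic case. But equation~(1.4) has the form $\dot z=z^4+A(t)z^3+B(t)z^2$ with a \emph{constant} leading coefficient, and for such equations the origin is \emph{never} a center: it is always an isolated periodic solution (this is proved in \cite{A4} and recalled at the start of Section~3). Concretely, from the linear recursion~(3.1) one has $V_4(1)=-4\int_0^1[B V_3+A V_2+1]\,ds$, and the ``$+1$'' coming from the $z^4$ term guarantees that the focal values can never all vanish. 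Consequently the correct stopping criterion for the upper bound is not ``every component is a center'' but simply that the ideal $\langle \eta_2,\ldots,\eta_K\rangle$ equals $\langle 1\rangle$, i.e.\ the variety is empty. This is exactly what the paper does: compute Gr\"obner bases $G_2,G_3,\ldots$ until $G_K=\langle 1\rangle$. Your proposed program of primary decomposition followed by case-by-case center verification is unnecessary here and, worse, cannot succeed as stated, since there are no center components to find.

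Two smaller points. First, the paper works with the linear recursion~(3.1) for $V_k$ rather than the nonlinear recursion~(1.5) for $a_n$; since $a_k(1)=-\tfrac{1}{k}V_k(1)$ modulo the lower ideal, this is equivalent but computationally much lighter, and you would do well to adopt it. Second, the paper systematically uses the scaling $z\mapsto z/\lambda$ to normalize one parameter (e.g.\ setting $c-a=1$ in the piecewise linear $B$), which keeps the Gr\"obner computations tractable; your proposal does not mention this reduction, and without it the larger cases ($\mu(2,3)=10$, $\mu(1,5)=10$) will be substantially harder.
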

These results provide evidences for the following conjecture.
\begin{conj}
 For the equations (1.1) and (1.4), $\mu_{1}(m,n)=\mu_{2}(m,n)$, for all $m$ and $n$.
\end{conj}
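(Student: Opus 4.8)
The starting point is a structural coincidence that makes the conjecture plausible: the two coefficient classes carry the same number of free parameters. A polynomial of degree $n$ has $n+1$ coefficients, while a continuous piecewise linear function with $n$ segments and fixed breakpoints at $k/n$ is determined by its $n+1$ nodal values (continuity absorbs the $n-1$ matching conditions). Hence in both settings $A$ ranges over an $(n+1)$-dimensional space and $B$ over an $(m+1)$-dimensional space, and the multiplicity of $z=0$ is controlled by the same kind of object. Indeed, by the recursions (1.3) and (1.5) together with the initial conditions, each focal value $a_k(1)$ is an iterated integral of products of $A$ and $B$, which in either class is a polynomial function of the parameters. Thus for both $\mu_{1}$ and $\mu_{2}$ we study a list of real polynomials $P_2,P_3,\dots$ on $\mathbf{R}^{m+1}\times\mathbf{R}^{n+1}$, and the maximum multiplicity is the largest $k$ for which the real variety $V(P_2,\dots,P_{k-1})$ has a point off $V(P_k)$.

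The plan is to prove that the two polynomial systems generate essentially the same geometry. The route I would take is to factor the dependence of the focal values through a finite set of generalized moments — integrals such as $\int_0^1 t^i A\,dt$ and the higher iterated integrals of words in $A$ and $B$ — and to exploit the natural grading of the recursion in which $A$ carries weight $2$ and $B$ carries weight $1$ (since, using $a_1\equiv 1$, a single factor of $A$ raises the index by two and a single factor of $B$ by one). Up to any fixed order $k$, the focal values then depend on only finitely many such moments, and the question reduces to comparing the images of two moment maps: one from the monomial basis $\{1,t,\dots,t^n\}$, the other from the hat-function basis $\{\phi_0,\dots,\phi_n\}$ of the piecewise linear space. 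Since both bases span $(n+1)$-dimensional spaces, these are two polynomial parametrizations of the same target, and showing that their images agree on the strata that matter would give $\mu_{1}=\mu_{2}$.

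A more computational route, matching the Gr\"{o}bner basis methods (MSC 13P10) used to settle the individual cases in the two theorems above, is to compare the focal-value ideals directly. For each class one forms $I_k=(P_2,\dots,P_k)$ and reads the maximum multiplicity from the stratification of the parameter space by $\sqrt{I_{k-1}}$ versus $\sqrt{I_k}$ — equivalently, from the center variety (where all focal values vanish) and the strata of finite multiplicity around it. The goal would be to exhibit a polynomial correspondence between the relevant strata of the two parameter spaces, most plausibly by matching the initial forms of the focal values with respect to the weight filtration described above; if the leading behavior of the $P_k$ coincides under the two parametrizations, the center varieties, and hence the maxima, must agree.

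The hard part will be exactly this reconciliation step. Equality of dimension guarantees the same number of parameters but not that the nonlinear maps sending the parameters to $(a_2(1),a_3(1),\dots)$ have the same image geometry: the iterated integrals of hat functions and of monomials are genuinely different polynomials, and there is no formal substitution carrying one system into the other. In particular the center variety could \emph{a priori} have a different dimension or a different number of components in the two classes, which would immediately break the equality of maxima. Ruling this out uniformly in $m$ and $n$, and simultaneously for the cubic and quartic equations, is precisely what separates the conjecture from the theorems: the evidence above comes from settling finitely many cases by explicit computation, whereas a general proof would require a structural invariant of the focal-value ideals that is insensitive to the choice between polynomial and piecewise linear coefficients.
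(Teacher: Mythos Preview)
The statement is labeled a \emph{Conjecture} in the paper, and the paper offers no proof of it: the only support is the computational evidence of Theorems~1.1 and~1.2, which verify $\mu_1(m,n)=\mu_2(m,n)$ for a handful of small pairs $(m,n)$ by explicitly computing Gr\"{o}bner bases in each class separately. There is therefore no ``paper's own proof'' to compare against.

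Your proposal is likewise not a proof, and to your credit you say so in the final paragraph. The dimension count is correct and is indeed the heuristic behind the conjecture, but as you observe, equal numbers of parameters do not force two nonlinear polynomial maps to have the same image stratification. The two routes you sketch --- matching the images of the moment maps, or matching the radical ideals $\sqrt{I_k}$ and their real loci --- both reduce to precisely the unproved assertion: that the center varieties and the surrounding multiplicity strata coincide for the monomial and hat-function parametrizations. You have reformulated the conjecture rather than proved it. No concrete mechanism (a change of variables, an invariant, a bijection of strata) is supplied that would force the focal-value ideals in the two classes to agree, and the paper's own case-by-case computations show that the intermediate Gr\"{o}bner bases look quite different even when the final multiplicities happen to match. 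So the genuine gap is the one you yourself name: a structural reason, valid for all $m$ and $n$, why the choice of basis for the coefficient spaces should be invisible to the maximum multiplicity. Neither you nor the paper provides one; the statement remains open.
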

\section{Cubic Equations}
The formula (1.3) is nonlinear with an increasing number of terms. In computing multiplicity, another linear formula is used.
To derive this linear formula, we use the expansion of the inverse Poincar\'{e} mapping
\begin{equation}
 c= \sum_{k=1}^{\infty} \frac{1}{k}V_{k}(t)z^{k},
 \end{equation}
where $V_{0}(1)=1$, and $V_{k}(1)=0,\,k>1$.
From the two expansions (1.2) and (2.1), we have
\[c= \sum_{k=1}^{\infty} \frac{1}{k}\,V_{k}\,[\sum_{n=1}^{\infty}a_{n}\,c^{n}]^{k}.\]
It follows from equating the coefficients of $c^{i}$ in both sides that if $a_{i}(1)=V_{i}(1)=0,\,1<i<k-1$, then
\[a_{k}(1)=-\frac{1}{k}V_{k}(1).\]
Next, differentiate $c= \sum_{k=1}^{\infty} \frac{1}{k}V_{k}(t)\,z^{k}$ with respect to $t$ and then substitute in (1.1); we obtain
 \[ 0= \sum_{1}^{\infty} (\frac{1}{k}V'_{k}\,z^{k}+V_{k}\,z^{k-1}z')= \sum_{1}^{\infty} (\frac{1}{k}V'_{k}\,z^{k}+V_{k}\,z^{k-1}(A\,z^{3}+B\,z^{2})).\]
From equating the coefficients of $z^{k}$ in both sides, we have
\begin{equation}
V_{1}(t)\equiv 1,\,V_{2}(t)=-2 \int_{0}^{t}\, B(s)\,ds,
\end{equation}
\begin{equation}
V_{k}(t)=-k \int_{0}^{t}\,[B(s)\, V_{k-1}(s)+A(s) \,V_{k-2}(s)]\, ds, k>2.
\end{equation}
This formula is linear and easier to implement than the formula for $a_{n}$. We summarize these remarks as follows.
\begin{prop}
Suppose that $V_{i}(t)$ are defined by the formulae (2.2). The solution $z=0$ of equation (1.1) is of multiplicity $k$ if and only if $V_{i}(1)=0$ for $2 \leq i \leq k-1$ and $V_{k}(1) \neq 0$. The solution $z=0$ is stable when $V_{k}(1) > 0$, and is unstable when $V_{k}(1) < 0$.
\end{prop}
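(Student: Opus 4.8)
The plan is to deduce the $V$-criterion directly from the $a$-criterion for multiplicity (and stability) recorded in the introduction, the bridge between them being the single relation $a_{k}(1)=-\frac{1}{k}V_{k}(1)$, valid once the lower-order quantities vanish. The starting point is that the expansions (1.2) and (2.1) are formal inverses of one another: since $a_{1}\equiv 1\neq 0$, the analytic map $c\mapsto z(t,c)$ has an analytic local inverse near the origin, so substituting $z=\sum_{n\geq 1}a_{n}(t)c^{n}$ into $c=\sum_{k\geq 1}\frac{1}{k}V_{k}(t)z^{k}$ yields the identity
\[
c=\sum_{k=1}^{\infty}\frac{1}{k}\,V_{k}\Bigl[\sum_{n=1}^{\infty}a_{n}\,c^{n}\Bigr]^{k}.
\]
Comparing the coefficient of $c^{i}$ on the two sides (the left side contributing $1$ for $i=1$ and $0$ for $i\geq 2$) drives the whole argument.

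First I would isolate the structure of that coefficient comparison. In the summand $k=i$ the only admissible term is $n_{1}=\cdots=n_{i}=1$, which because $a_{1}\equiv 1$ contributes exactly $\frac{1}{i}V_{i}$; the summand $k=1$ contributes $V_{1}a_{i}=a_{i}$; and every remaining summand, $2\leq k\leq i-1$, carries the factor $V_{k}$ times a sum of products $a_{n_{1}}\cdots a_{n_{k}}$ whose largest index is at most $i-(k-1)\leq i-1$. Hence for each $i\geq 2$ one obtains the identity of functions of $t$
\[
0=a_{i}+\frac{1}{i}V_{i}+\sum_{k=2}^{i-1}\frac{1}{k}\,V_{k}\!\!\sum_{\substack{n_{1}+\cdots+n_{k}=i\\ n_{j}\geq 1}}\!\!a_{n_{1}}\cdots a_{n_{k}},
\]
in which the correction sum involves only indices strictly below $i$.

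Next I would run a simultaneous strong induction on $m\geq 2$ for the two one-sided implications: $P(m)$, that $a_{i}(1)=0$ for all $2\leq i\leq m-1$ forces both $V_{i}(1)=0$ for the same range and $a_{m}(1)=-\frac{1}{m}V_{m}(1)$; and its mirror $Q(m)$, with the roles of the $a_{i}$ and $V_{i}$ interchanged. The base case $m=2$ is the bare relation $0=a_{2}(1)+\frac{1}{2}V_{2}(1)$, since the correction sum is then empty. For the step of $P(m)$, applying $P(j)$ for each $2\leq j\leq m-1$ turns the hypothesis $a_{j}(1)=0$ into $V_{j}(1)=0$; evaluating the displayed identity at $t=1$ then annihilates the entire correction sum and leaves $a_{m}(1)=-\frac{1}{m}V_{m}(1)$. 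The step for $Q(m)$ is word-for-word the same with $a$ and $V$ exchanged, the correction sum vanishing because the $V_{k}(1)$ are assumed zero.

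Finally, taking $m=k$ closes the proof. If $z=0$ has multiplicity $k$, then by the introduction $a_{i}(1)=0$ for $2\leq i\leq k-1$ and $a_{k}(1)\neq 0$; applying $P(k)$ gives $V_{i}(1)=0$ for $2\leq i\leq k-1$ and, through $a_{k}(1)=-\frac{1}{k}V_{k}(1)$, gives $V_{k}(1)\neq 0$, while $Q(k)$ supplies the converse. The stability statement is read off the same relation: as $k>0$, one has $a_{k}(1)<0$ exactly when $V_{k}(1)>0$, so the introduction's dichotomy in $a_{k}(1)$ transcribes verbatim into the stated dichotomy in $V_{k}(1)$. I expect the only real obstacle to be organizational rather than conceptual, namely confirming the index bound $n_{j}\leq i-1$ that makes the correction sum depend only on lower levels, and keeping both directions of the equivalence propagating level by level, which is exactly what the paired statements $P(m)$ and $Q(m)$ are designed to guarantee.
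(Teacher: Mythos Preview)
Your argument is correct and is essentially the paper's own: the paper obtains the relation $a_{k}(1)=-\frac{1}{k}V_{k}(1)$ by substituting (1.2) into (2.1) and equating coefficients of $c^{i}$, then states the proposition as a summary of that computation. Your version is in fact more careful than the paper's, which asserts the relation under the joint hypothesis ``$a_{i}(1)=V_{i}(1)=0$ for $1<i<k-1$'' without explaining why one set of vanishings forces the other; your paired inductive statements $P(m)$ and $Q(m)$ fill exactly that gap.
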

The procedure of using the inverse Poincare map is classical, see  \cite{N1}. Similar formulae were obtained in \cite{A3} using Liapunov functions approach.

First, we present conditions on the functions $A(t)$ and $B(t)$ that imply $z=0$ is a center.

\begin{theo}
 Suppose that $A(t)$ and $B(t)$ are continuous functions. The condition
 \[z(\frac{1}{2}+t)=z(\frac{1}{2}-t)\]
 is satisfied by all solutions of equation (1.1) if and only if
 \[ A(\frac{1}{2}+t)=-A(\frac{1}{2}-t),\, B(\frac{1}{2}+t)=-B(\frac{1}{2}-t),\]
 for $0 \leq t \leq \frac{1}{2}$.
 \end{theo}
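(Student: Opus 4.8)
The plan is to prove the two implications separately. The converse (that the antisymmetry conditions force the symmetry of every solution) rests on a reflection substitution combined with uniqueness, while the forward direction (necessity) rests on differentiating the symmetry relation and exploiting that the resulting identity must hold across an interval of values of the solution.

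For sufficiency, I would assume $A(\tfrac{1}{2}+t)=-A(\tfrac{1}{2}-t)$ and $B(\tfrac{1}{2}+t)=-B(\tfrac{1}{2}-t)$, fix a solution $z$, and set $u(t)=z(\tfrac{1}{2}-t)$ and $v(t)=z(\tfrac{1}{2}+t)$. Differentiating $v$ and substituting (1.1) gives
\[
\dot v = A(\tfrac{1}{2}+t)\,v^{3}+B(\tfrac{1}{2}+t)\,v^{2}.
\]
For $u$ the chain rule produces a sign from $\tfrac{d}{dt}(\tfrac{1}{2}-t)=-1$, so that $\dot u=-A(\tfrac{1}{2}-t)\,u^{3}-B(\tfrac{1}{2}-t)\,u^{2}$; inserting the antisymmetry relations $A(\tfrac{1}{2}-t)=-A(\tfrac{1}{2}+t)$ and $B(\tfrac{1}{2}-t)=-B(\tfrac{1}{2}+t)$ makes the two sign changes cancel, so $u$ satisfies the \emph{same} equation as $v$. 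Since $u(0)=v(0)=z(\tfrac{1}{2})$, uniqueness of solutions of the initial value problem (the right side is $C^{1}$ in $y$, hence locally Lipschitz) forces $u\equiv v$, which is exactly $z(\tfrac{1}{2}-t)=z(\tfrac{1}{2}+t)$.

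For necessity, I would assume the symmetry $z(\tfrac{1}{2}+t)=z(\tfrac{1}{2}-t)$ holds for every solution. Differentiating in $t$ gives $\dot z(\tfrac{1}{2}+t)=-\dot z(\tfrac{1}{2}-t)$; substituting (1.1) on both sides and writing $\zeta$ for the common value $z(\tfrac{1}{2}+t)=z(\tfrac{1}{2}-t)$ yields
\[
\bigl[A(\tfrac{1}{2}+t)+A(\tfrac{1}{2}-t)\bigr]\,\zeta^{3}+\bigl[B(\tfrac{1}{2}+t)+B(\tfrac{1}{2}-t)\bigr]\,\zeta^{2}=0.
\]
The crucial observation is that this holds simultaneously for \emph{all} solutions. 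Fixing $t$ and letting the initial value $c$ vary near $0$, the flow map $c\mapsto z(\tfrac{1}{2}+t,c)$ is a local diffeomorphism, since $\partial z/\partial c=1+O(c)$ from the expansion (1.2) with $a_{1}(t)\equiv 1$; hence $\zeta$ ranges over an open interval. Dividing by $\zeta^{2}$ leaves a degree-one polynomial in $\zeta$ vanishing on an interval, which forces both coefficients to be zero, and these are precisely the two antisymmetry conditions.

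The main obstacle is the necessity direction, specifically the justification that $\zeta$ sweeps out an open interval as the initial condition varies: without it, the vanishing of the single displayed expression would not separate into two independent coefficient conditions. Once smooth dependence on initial data and the nonvanishing of $\partial z/\partial c$ near the origin are invoked, the argument closes, with the remaining steps being routine applications of the chain rule and of uniqueness.
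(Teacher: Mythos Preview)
Your proof is correct and follows essentially the same approach as the paper: the sufficiency direction is the same reflection/uniqueness argument, and the necessity direction is the same differentiate-and-substitute maneuver yielding a polynomial identity in the solution value. Your added justification that $\zeta$ sweeps an open interval (via $\partial z/\partial c=1+O(c)$) makes explicit what the paper leaves implicit when it simply asserts the identity holds ``for all small $z$.''
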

\begin{proof}
Suppose that $A(t)$ and $B(t)$ satisfy the above condition. Let $z(t)$ be a solution of (1.1) defined on the interval $[0,1]$. Consider the functions
\[z_{1}(t)=z(\frac{1}{2}+t), z_{2}(t)=z(\frac{1}{2}-t).\]
The functions are defined on the interval $[0,\frac{1}{2}]$.
Differentiate $z_{1}(t)$ and $z_{2}(t)$ and then substitute in the differential equation; this gives
\[\dot{z}_{1}(t)=A(\frac{1}{2}+t)\,z^{3}(\frac{1}{2}+t)+B(\frac{1}{2}+t)\,z^{2}(\frac{1}{2}+t)=
A(\frac{1}{2}+t)\,z_{1}^{3}(t)+B(\frac{1}{2}+t)\,z_{1}^{2}(t),\]
\[\dot{z}_{2}(t)=-A(\frac{1}{2}-t)\,z^{3}(\frac{1}{2}-t)-B(\frac{1}{2}-t)\,z^{2}(\frac{1}{2}-t)=A(\frac{1}{2}+t)\,z_{2}^{3}(t)+
B(\frac{1}{2}+t)\,z_{2}^{2}(t).\]
Hence, $z_{1}(t)$ and $z_{2}(t)$  are solutions of the differential equation
\[\dot{z}(t)=A(\frac{1}{2}+t)\,z^{3}(t)+B(\frac{1}{2}+t)\,z^{2}(t).\]
On the other hand, $z_{1}(0)=z_{2}(0)=z(\frac{1}{2})$. The uniqueness theorem implies that $z_{1}(t) \equiv z_{2}(t)$. Therefore,
\[z(\frac{1}{2}+t)=z(\frac{1}{2}-t).\]
In particular, $z(0)=z(1)$.

Conversely, assume that all solutions $z(t)$ starting in a neighborhood of the origin satisfy the condition $z(\frac{1}{2}+t)=z(\frac{1}{2}-t)$ for $0 \leq t \leq \frac{1}{2}$. Now, we differentiate both sides and substitute in the equation to obtain
\[ z(\frac{1}{2}+t)[A(\frac{1}{2}+t)+A(\frac{1}{2}-t)] + B(\frac{1}{2}+t)+B(\frac{1}{2}-t)=0\]
for all small $z$. Therefore,
\[ A(\frac{1}{2}+t)=-A(\frac{1}{2}-t),\,B(\frac{1}{2}+t)=-B(\frac{1}{2}-t).\]
\end{proof}
If $A(t)$ and $B(t)$ satisfy the condition in Theorem 2.2 then
\[A_{1}(t)=A(t-\frac{1}{2}), B_{1}(t)=B(t-\frac{1}{2})\]
are odd functions. It follows from the theory of Fourier series that these functions are of the form $\sin(4\pi t)\, g(\cos(4 \pi t))$, where $g$ is a continuous function. Therefore, $A(t)$  and $B(t)$ satisfy the composition condition.
Recall that $A(t)$ and $B(t)$ satisfy the composition condition if $A(t)=s'(t)A_{1}(s(t))$ and $B(t)=s'(t)B_{1}(s(t))$, where $s(t)$ is a periodic function and $A_{1}$ and $B_{1}$ continuous functions. We refer the reader to \cite{A5} for more details.
\begin{lema}\cite{A5}
If $A(t)$ and $B(t)$ satisfy the composition condition then $z=0$ is a center for equation (1.1).
\end{lema}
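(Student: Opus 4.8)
The plan is to reduce equation (1.1) to an ``inner'' Abel equation by the change of independent variable $\tau = s(t)$ furnished by the composition condition, and then to exploit the periodicity of $s$. Writing $A(t) = s'(t)\,A_{1}(s(t))$ and $B(t) = s'(t)\,B_{1}(s(t))$, I would first let $Z(\tau,c)$ denote the solution of the inner equation
\[
\frac{dZ}{d\tau} = A_{1}(\tau)\,Z^{3} + B_{1}(\tau)\,Z^{2},
\]
subject to $Z(s(0),c) = c$. Since $Z \equiv 0$ when $c = 0$, continuous dependence on initial conditions guarantees that for all $c$ in a sufficiently small neighborhood of the origin the solution $Z(\cdot,c)$ exists and stays close to $0$ on the whole compact range $s([0,1])$. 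This is the step I expect to require the most care: when $s$ is non-monotone this range extends to both sides of $s(0)$, so one must ensure that the inner solution persists over all of it while remaining in the region where the uniqueness theorem applies.

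Next I would verify that $w(t) := Z(s(t),c)$ actually solves the original equation (1.1). Differentiating and using the defining equation for $Z$ together with the composition condition gives
\[
\dot{w}(t) = s'(t)\,\big[A_{1}(s(t))\,w^{3} + B_{1}(s(t))\,w^{2}\big] = A(t)\,w^{3} + B(t)\,w^{2},
\]
while $w(0) = Z(s(0),c) = c$. Hence $w$ and $z(\cdot,c)$ solve the same initial value problem, so by uniqueness $z(t,c) = Z(s(t),c)$ for $0 \le t \le 1$.

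Finally I would invoke the periodicity of $s$, which yields $s(1) = s(0)$, and therefore
\[
z(1,c) = Z(s(1),c) = Z(s(0),c) = c
\]
for every $c$ in the neighborhood above. Thus $q(c) = z(1,c) - c \equiv 0$ near the origin, which is precisely the statement that $z = 0$ is a center. The only genuine obstacle is the existence issue flagged above; an alternative, more computational route would be to show directly from the formulae (2.2) that the composition condition forces $V_{k}(1) = 0$ for all $k \ge 2$ and then apply Proposition 2.1, but the change-of-variable argument is cleaner and avoids the recursion entirely.
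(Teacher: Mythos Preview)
The paper does not supply its own proof of this lemma; it is simply quoted from \cite{A5}. Your change-of-variable argument is the standard one and is correct: passing to the inner Abel equation via $\tau=s(t)$ and invoking $s(0)=s(1)$ yields $z(1,c)=Z(s(1),c)=Z(s(0),c)=c$ for all $c$ near $0$, and the existence issue you flag is indeed handled by continuous dependence of $Z(\cdot,c)$ on $c$ over the compact interval $s([0,1])$.
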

Let $t_{i}=\frac{i}{n}$ for $i=0,1,2,\cdots,n$. If $f(t)$ is a continuous piecewise linear function defined on the interval $[0,1]$, then $f(t)$ can be written in the form
\[
f(t)=m_{1}t+b+\frac{1}{2} \sum_{i=2}^{i=n}(m_{i}-m_{i-1})(t-t_{i-1}+\mid t-t_{i-1} \mid).
\]
If $t_{k-1}\leq t \leq t_{k}$ then $t-t_{i}+|t-t_{i}|=0$ for $i > k-1$, and $t-t_{i}-|t-t_{i}|=2t-2t_{i}$ for $i \leq k-1$. Therefore, in each subinterval $[t_{i-1},t_{i}]$, the formula becomes
\[
f(t)=m_{k}t+b+\frac{1}{n} [m_{1}+m_{2}+\cdots+m_{k-1}-(k-1)m_{k}].
\]
The slope of this line segment is $m_{i}$
\begin{coro}
 Suppose that $A(t)$ and $B(t)$ are piecewise linear continuous functions. Let $m_{k}$ and $n_{k}$ be the slopes of the line segments
 $t_{k-1}<t<t_{k},k=1,2,\cdots,n$ of $A(t)$ and $B(t)$, respectively. If $A(\frac{1}{2})=B(\frac{1}{2})=0$, $m_{k}=m_{n-k}$ and $n_{k}=n_{n-k}$ for $k=1,2,\cdots,n$ then equation (1.1) has a center at $z=0$.
\end{coro}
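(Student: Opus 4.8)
The plan is to deduce the corollary from Theorem 2.2 by checking that the piecewise linear coefficients satisfy the antisymmetry conditions
\[ A(\frac{1}{2}+t)=-A(\frac{1}{2}-t), \qquad B(\frac{1}{2}+t)=-B(\frac{1}{2}-t), \qquad 0\leq t\leq \frac{1}{2}. \]
Once these hold, Theorem 2.2 shows that every solution starting near the origin satisfies $z(\frac{1}{2}+t)=z(\frac{1}{2}-t)$; evaluating at $t=\frac{1}{2}$ gives $z(0)=z(1)$ for all such initial data, which is exactly the assertion that $z=0$ is a center. Thus the entire proof reduces to verifying the two antisymmetry conditions from the hypotheses $A(\frac{1}{2})=B(\frac{1}{2})=0$ together with the symmetry of the slopes.

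I would carry this out for $A$, the argument for $B$ being identical. Set $h(t)=A(\frac{1}{2}+t)+A(\frac{1}{2}-t)$ on $[0,\frac{1}{2}]$. Since $A$ is continuous, so is $h$, and $h(0)=2A(\frac{1}{2})=0$. Away from the breakpoints $t_{k}$, $h$ is differentiable with $h'(t)=A'(\frac{1}{2}+t)-A'(\frac{1}{2}-t)$. The key observation is that $\frac{1}{2}-t=1-(\frac{1}{2}+t)$ is the reflection of $\frac{1}{2}+t$ across $\frac{1}{2}$, so if $\frac{1}{2}+t$ lies in the open segment $(t_{k-1},t_{k})$ then $\frac{1}{2}-t$ lies in the reflected segment $(t_{n-k},t_{n-k+1})$, namely the $(n+1-k)$-th segment. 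The hypothesis that the slopes of reflected segments coincide then forces $h'(t)=0$ on each open segment. A continuous function whose derivative vanishes off a finite set is constant, so $h\equiv h(0)=0$, which is the antisymmetry of $A$.

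The only delicate point, and the one I expect to require the most care, is the bookkeeping at the breakpoints and the dependence on the parity of $n$. When $n$ is even the midpoint $\frac{1}{2}=t_{n/2}$ is itself a breakpoint and the reflection pairs the $k$-th segment with the $(n+1-k)$-th for every $k$; when $n$ is odd the middle segment containing $\frac{1}{2}$ is its own reflection, and on it $A$ is the single line through $(\frac{1}{2},0)$ with slope $m_{(n+1)/2}$, hence already odd about $\frac{1}{2}$. In both cases continuity of $h$ bridges the finitely many breakpoints, so no separate estimate is needed there.

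An alternative, purely computational route would substitute $\frac{1}{2}\pm t$ into the closed form $f(t)=m_{k}t+b+\frac{1}{n}[m_{1}+\cdots+m_{k-1}-(k-1)m_{k}]$ valid on $[t_{k-1},t_{k}]$ and add the two expressions over reflected segments to obtain zero; this avoids differentiating but requires tracking the constant terms. One could also note that $A(t-\frac{1}{2})$ and $B(t-\frac{1}{2})$ become odd functions, hence satisfy the composition condition, so that Lemma 2.3 applies directly; the route through Theorem 2.2 is the shortest.
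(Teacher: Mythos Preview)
Your proposal is correct and follows the same strategy as the paper: both reduce the corollary to Theorem~2.2 by verifying the antisymmetry $A(\tfrac{1}{2}+t)=-A(\tfrac{1}{2}-t)$ (and likewise for $B$). The paper does this by writing out $A(\tfrac{1}{2}\pm t)$ explicitly as sums of slope contributions over the intervening segments and observing cancellation, whereas your argument via $h(t)=A(\tfrac{1}{2}+t)+A(\tfrac{1}{2}-t)$, $h(0)=0$, $h'(t)=m_k-m_{n+1-k}=0$ is a cleaner variant of the same verification; your closing ``alternative, purely computational route'' is in fact exactly what the paper does.
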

\begin{proof}
We show that $A(t)$ satisfies the conditions in Theorem 2.2.
For a given $t$, let $S_{1},S_{2},\cdots,S_{k}$ be the segments that contain the interval $\frac{1}{2}\leq t \leq \frac{1}{2}+t$. This implies that
\[A(\frac{1}{2}+t)= s_{1}\frac{1}{2n}+s_{2}\frac{1}{n}+s_{3}\frac{1}{n}+\cdots+s_{k-1}\frac{1}{n}+ s_{k}(\frac{1}{2}+t-\frac{2k-3}{2n}),\]
where, $s_{i}$ is the slope of the line segment $S_{i}$.
Simplifying the right hand side gives
\[A(\frac{1}{2}+t)=\frac{(s_{1}+2s_{2}+\cdots+2s_{k-1}+s_{k}(n+2nt-2k+3))}{2n},\]
On the other hand,
\[A(\frac{1}{2}-t)= -s_{1}\frac{1}{2n}-s_{2}\frac{1}{n}-s_{3}\frac{1}{n}-\cdots-s_{k-1}\frac{1}{n}- s_{k}(\frac{1}{2}-t-\frac{2k-3}{2n}),\]
and hence,
\[A(\frac{1}{2}-t)=\frac{-(s_{1}+2s_{2}+\cdots+2s_{k-1}+s_{k}(n+2nt-2k+3))}{2n}.\]
The same argument is applied to $B(t)$. Hence, $A(t)$ and $B(t)$ satisfy the conditions in Theorem 2.2.
\end{proof}
\begin{rema}
\begin{enumerate}
\item
It follows from Corollary 2.3 that if $z=0$ is a center with respect to the interval $[0,1]$, then $z=0$ is a center with respect to any interval of the form $[\frac{1}{2}-r,\frac{1}{2}+r]$ with $0 \leq r \leq \frac{1}{2}$.
\item
Suppose that $f(t)$ satisfy the conditions in Corollary 2.2. If the number of segments is even then the two middle segments have the same slope and hence can be considered of one segment with length $\frac{2}{n}$ and we call it the middle segment. To understand the shape of $f(t)$, we start from middle segment and add two parallel segments, one at each side. We continue in this process until we cover the interval $[0,1]$. The only condition is that the point $(\frac{1}{2},0)$ is on the middle segment.
\end{enumerate}
\end{rema}
\medskip
\noindent
\textbf{Notation.} Let $\eta_{k}$ denotes $V_{k}(1)$ modulo the ideal generated by $\langle V_{2}(1),V_{3}(1),\cdots,V_{k-1} \rangle$. The multiplicity of the solution $z=0$ is $k$ when $\eta_{2}=\eta_{3}=\cdots=\eta_{k-1}=0,$ and $\eta_{k} \neq 0$.  We use the theory of Gr\"{o}bner bases to simply the base of an ideal. The Gr\"{o}bner basis of the ideal generated by $\langle \eta_{2},\eta_{3},\cdots,\eta_{k} \rangle$ is denoted by $G_{k}$. The Computer Algebra System Maple is used in computing Gr\"{o}bner bases; see \cite{D1}.
\medskip
Now, we prove Theorem 1.1. In the case that the coefficients are polynomial functions, the results were proved in \cite{A1}, \cite{A5} and \cite{B1}. We present the proofs for completeness.
\begin{proof}
(\emph {Theorem 1.1})
We compute the Gr\"{o}bner bases $G_{2},G_{3},\cdots,G_{k}$, such that vanishing all polynomials in $G_{k}$ implies that the origin is a center.
The existence of $G_{k}$ follows from Hilbert's finiteness theorem. The basis $G_{k}$ is the basis of the center ideal. This ideal is called the Bautin ideal; see, for example, \cite{Y1}. To show that $z=0$ is a center, we need sufficient and necessary conditions for a center.

\begin{enumerate}
  \item
  Let
  \[ B(t)=a+2\,bt+3\,c{t}^{2},\, A(t)=d+2\,et+3\,f{t}^{2}.\]
  The Gr\"{o}bner basis is given by
\[G_{4}=\langle ec-fb,a+b+c,f+e+d \rangle.\]
These three conditions imply that $B(t)= \frac{b}{e}\,A(t)$. Lemma 2.3 implies that $z=0$ is a center. With the notation, $s(t)$ is the definite integral of $A(t)$. Therefore, $\mu_{1}(2,2)=4$.
In the case of piecewise linear coefficients, we let
\[ B(t)=at+b+\frac{1}{2}\, \left( c-a \right)  \left( t-\frac{1}{2}+ \left| t-\frac{1}{2} \right|
 \right),\]
\[ A(t)=dt+e+\frac{1}{2}\, \left( f-d \right)  \left( t-\frac{1}{2}+ \left| t-\frac{1}{2} \right|
 \right).\]
We obtain a similar Gr\"{o}bner basis
\[G_{4}=\langle ce-fb,8\,b+c+3\,a,f+3\,d+8\,e \rangle \]
These conditions imply that $B(t)=\frac{b}{e}\,A(t)$. Same argument used above implies that $\mu_{2}(2,2)=4$.
  \item
Let
  \[ B(t)=a+2\,bt+3\,c{t}^{2},\,A(t)=d+2\,et+3\,f{t}^{2}+4\,g{t}^{3}\]
The basis is given by
\begin{gather*}
G_{7}=\langle a+b+c,g+f+e+d,1287\,gfc+1482\,{g}^{2}c+858\,gec-2\,g{b}^{2}c,\\-7\,ec+7
\,fb+14\,gb+9\,gc,
39\,{g}^{2}bc-7\,gc{b}^{3},3\,g{c}^{2}+2\,gbc,-3\,gc
{b}^{3}+13\,egbc \rangle.
\end{gather*}
The case $g=0$ is considered in part (1) and the case $c=0$ is considered in the part (3). We assume that $gc \neq 0$. The basis becomes
\[G_{7}=\langle 3\,a+b,7\,f-9\,h,2\,h+g,3\,c+2\,b,7\,e+2\,h,-39\,h+7\,{b}^{2} \rangle.\]
and $\eta_{8}$ is given by
\[\eta_{8}=-{\frac {17}{10405395}}\,c{g}^{3}+{\frac {4}{1486485}}\,ce{g}^{2}-{
\frac {1}{945945}}\,cg{e}^{2}+{\frac {41936}{15335981015355}}\,cg{b}^{
4}.\]
Adding $\eta_{8}$ to the basis gives
\[G_{8}=\langle 1\rangle. \]
Hence $\mu_{1}(2,3)=8$.

Next, let
\[ B(t)=at+b+\frac{1}{2}\, \left( c-a \right)  \left( t-\frac{1}{2}+ \left| t-\frac{1}{2} \right|
 \right) \]
\[A(t)=dt+e+\frac{1}{2}\, \left( f-d \right)  \left( t-\frac{1}{3}+ \left| t-\frac{1}{3} \right|
 \right) +\frac{1}{2}\, \left( g-f \right)  \left( t-\frac{2}{3}+ \left| t-\frac{2}{3}
 \right|  \right).\]
The case that $c-a=0$ is considered in the part (3). When $c-a \neq 0$, we make the change of variables
\[ z \mapsto \frac{1}{c-a} \,z. \]
This transformation reduces the equation into a similar one but $c-a$ is replaced by $1$. It should be mentioned that this transformation does not change the multiplicity. Hence, we let $c-a=1$. The basis is given by
\[G_{8}=\langle 4\,a+8\,b+1,d,e,4\,c-3+8\,b,f,g \rangle .\]
These conditions imply that $A(t)\equiv 0$, and hence the origin is a center.
\item
It follows from Proposition 2.1 that if the multiplicity is greater than $2$ then $\int_{0}^{1}\,B(t)\,dt=0$. If $B(t)$ is a linear function then  the multiplicity is greater than $2$ when $B(t)= u\,(2\,t-1)$, where $u$ is a constant. With the transformation
\[ z \mapsto \frac{1}{u} \,z \]
we can assume that $u=1$. We write the polynomials in the form
  \[B(t)=2\,t-1,\]
  \[ A(t)=
a+b \left( {t}^{2}-t \right) +c \left( {t}^{2}-t \right) ^{2}+d
 \left( {t}^{2}-t \right) ^{3}+ \left( 2\,t-1 \right)  \left( e+f
 \left( {t}^{2}-t \right) +g \left( {t}^{2}-t \right) ^{2}+h \left( {t
}^{2}-t \right) ^{3} \right) .\]
This form is used in \cite{A1} and it gives smaller Gr\"{o}bner bases. It is clear that if $a=b=c=d$ implies that $z=0$ is a center. The composition condition in Lemma 2.3 is satisfied with $s(t)=t^{2}-t$.

The bases for all the cases are given by \\
If $f=g=h=d=c=0$ then $G_{4}=\langle a,b \rangle .$\\
If $g=h=d=c=0$ then $G_{4}=\langle a,b \rangle .$\\
If $ g=h=d=0$ then $G_{5}=\langle a,b,c \rangle .$\\
If $h=d=0$ then $G_{5}=\langle a,b,c \rangle .$\\
If $h=0$ then $G_{10}=\langle a,b,c,d \rangle .$\\
If the degree of $A(t)$ is $7$ then $G_{11}=\langle a,b,c,d \rangle.$

For the case of piecewise linear coefficients, we take
\[ B(t)=2\,t-1 \]
and the forms of $A(t)$ are taken separately. In each case the center follows from Corollary 2.4.
\[A(t)=at+b+\frac{1}{2}\, \left( c-a \right)  \left( t-\frac{1}{2}+ \left| t-\frac{1}{2} \right| \right)\]
The basis is given by
\[G_{4}=\langle a-c,c+2\,b \rangle\]
and
\[A(\frac{1}{2})=\frac{1}{2}\,a+b = 0.\]
With three segments, we take
\[A(t)=at+b+\frac{1}{2}\, \left( c-a \right)  \left( t-\frac{1}{3}+ \left| t-\frac{1}{3} \right| \right)+\frac{1}{2}\, \left( d-c \right) \left (t-\frac{2}{3} +\left| t-\frac{2}{3} \right| \right)\]
And this gives
\[G_{4}=\langle a - d, c + 2 d + 6 b \rangle\]
and
\[ A(\frac{1}{2})= \frac{a}{3} + b + \frac{c}{6}=0\]
Similarly, for $4,5,6$ and $7$ segments the forms of $A(t)$ and the bases are given in by:
\begin{gather*}
A(t)=at+b+\frac{1}{2}\, \left( c-a \right)  \left( t-\frac{1}{4}+ \left| t-\frac{1}{4} \right| \right)+\frac{1}{2}\, \left( d-c \right) \left (t-\frac{2}{4} +\left| t-\frac{2}{4} \right| \right) +\\\frac{1}{2} \,\left( e-d \right) \left(t-\frac{3}{4} +\left|t-\frac{3}{4} \right| \right)
\end{gather*}
\[ G_{5}=\langle a - e, 4 b + d + e, c - d \rangle\]
\[A(\frac{1}{2})= \frac{a}{4} + b + \frac{c}{4}=0 \]
 \begin{gather*}
 A(t)=at+b+\frac{1}{2}\, \left( c-a \right)  \left( t-\frac{1}{5}+ \left| t-\frac{1}{5} \right| \right)+\frac{1}{2}\, \left( d-c \right) \left (t-\frac{2}{5} +\left| t-\frac{2}{5} \right|  \right)+\\\frac{1}{2} \,\left( e-d \right) \left( t-\frac{3}{5} +\left| t-\frac{3}{5} \right|\right)  + \frac{1}{2}\, \left(f-e \right) \left(t-\frac{4}{5} + \left| t-\frac{4}{5} \right| \right)
 \end{gather*}
 \[G_{5}=\langle a - f, 10 b + d + 2 e + 2 f, c - e \rangle\]
 \[A(\frac{1}{2})= \frac{a}{5} + b + \frac{c}{5} + \frac{d}{10}=0\]
     \begin{gather*}
     A(t)=at+b+\frac{1}{2}\, \left( c-a \right)  \left( t-\frac{1}{6}+ \left| t-\frac{1}{6} \right| \right)+\frac{1}{2}\, \left( d-c \right) \left (t-\frac{2}{6} +\left| t-\frac{2}{6} \right|  \right)+\\\frac{1}{2} \,\left( e-d \right) \left( t-\frac{3}{6} +\left| t-\frac{3}{6} \right|\right)  + \frac{1}{2}\, \left(f-e \right) \left(t-\frac{4}{6} + \left| t-\frac{4}{6} \right| \right) + \\\frac{1}{2}\, \left (g-f \right) \left(t-\frac{5}{6} +\left| t-\frac{5}{6} \right| \right)
     \end{gather*}
     \[G_{10}=\langle a - g, 6 b + e + f + g, c - f, d - e \rangle\]
     \[A(\frac{1}{2})= \frac{a}{6} + b + \frac{c}{6} + \frac{d}{6}=0\]
   \begin{gather*}
   A(t)=at+b+\frac{1}{2}\, \left( c-a \right)  \left( t-\frac{1}{7}+ \left| t-\frac{1}{7} \right| \right)+\frac{1}{2}\, \left( d-c \right) \left (t-\frac{2}{7} +\left| t-\frac{2}{7} \right|  \right)\\+\frac{1}{2} \,\left( e-d \right) \left( t-\frac{3}{7} +\left| t-\frac{3}{7} \right|\right)  + \frac{1}{2}\, \left(f-e \right) \left(t-\frac{4}{7} + \left| t-\frac{4}{7} \right| \right) + \\\frac{1}{2}\, \left (g-f \right) \left(t-\frac{5}{7} +\left| t-\frac{5}{7} \right| \right) + \frac{1}{2}\, \left(h-g \right) \left(t-\frac{6}{7}+\left|t-\frac{6}{7} \right| \right)
   \end{gather*}
        \[G_{11}=\langle a - h, 14 b + e + 2 f + 2 g + 2 h, c - g, d - f \rangle\]
             \[A(\frac{1}{2})=\frac{a}{7} + b + \frac{c}{7} + \frac{d}{7} + \frac{e}{14}=0\]
\end{enumerate}
\end{proof}
 \section{Quartic Equations}
 A linear recursive formula for computing the multiplicity can be derived as in Section 2. In this case,
 the functions $V_{k}(t)$ are defined by:
\begin{equation}
V_{1}(t)\equiv 1,\,\,V_{k}(t)=-k \int_{0}^{t}[B(s) V_{k-1}(s)+A(s) V_{k-2}(s)+V_{k-3}(s)] ds,\,k>1.
\end{equation}
\begin{prop}
Suppose that $V_{i}(t)$ are defined by the formula (3.1). The solution $z=0$ of equation (1.4) is of multiplicity $k$ if and only if $V_{i}(1)=0$ for $2 \leq i \leq k-1$ and $V_{k}(1) \neq 0$.
\end{prop}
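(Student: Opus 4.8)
The plan is to mirror, step for step, the derivation that produced Proposition 2.1 in the cubic case, replacing the cubic right-hand side by that of equation (1.4). The starting point is the inverse Poincaré expansion
\[
c = \sum_{k=1}^{\infty} \frac{1}{k}\, V_k(t)\, z^k, \qquad z = z(t,c),
\]
where the $V_k(t)$ are normalized by $V_1\equiv 1$ and $V_k(0)=0$ for $k>1$. This normalization is not an extra hypothesis: the recursion (3.1) builds each $V_k$, $k\ge 2$, as an integral from $0$, and therefore automatically forces $V_k(0)=0$, while $V_1\equiv 1$ by definition.

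First I would establish that this series, evaluated along a solution $z=z(t,c)$, is independent of $t$. Differentiating the expansion with respect to $t$ and substituting $z' = z^4 + A z^3 + B z^2$ from (1.4) gives
\[
0 = \sum_{k=1}^{\infty}\Bigl(\tfrac{1}{k}V_k'\, z^k + V_k\, z^{k-1}(z^4 + A z^3 + B z^2)\Bigr).
\]
Collecting the coefficient of $z^k$ — in which the $z^{k+3}$, $A z^{k+2}$ and $B z^{k+1}$ contributions come from $V_{k-3}$, $V_{k-2}$ and $V_{k-1}$ respectively — yields $\tfrac{1}{k}V_k' + B V_{k-1} + A V_{k-2} + V_{k-3}=0$, which integrates to exactly formula (3.1). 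The only genuine difference from Section 2 is the appearance of the extra shift $V_{k-3}$ produced by the quartic term $z^4$; the bookkeeping of which index feeds which power of $z$ is the one place where mechanical care is needed.

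Because the sum is constant in $t$ and equals $c$ at $t=0$ (only the $k=1$ term survives there, and $z(0,c)=c$), evaluating at $t=1$ gives $c=\sum_{k\ge 1}\tfrac1k V_k(1)\,z(1,c)^k$; that is, $\sum_{k\ge 1}\tfrac1k V_k(1)\, w^k$ is the functional inverse of the return map $w = z(1,c)$. Since (1.5) forces $a_1(t)\equiv 1$, we have $z(1,c)=c+\sum_{n\ge2}a_n(1)c^n$, and since $V_1(1)=1$, both the displacement $q(c)=z(1,c)-c$ and the series $\sum_{k\ge2}\tfrac1k V_k(1)\,w^k$ begin at order $\ge 2$.

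Finally I would transfer the multiplicity characterization from the $a_n(1)$ — which the Introduction uses to define multiplicity for equation (1.4) — to the $V_k(1)$. Substituting (1.2) into the expansion and equating coefficients of $c^i$, precisely the computation carried out in Section 2, shows by induction that if $a_i(1)=0$ for $1<i<k$ then $V_i(1)=0$ for the same range and $a_k(1)=-\tfrac1k V_k(1)$. Hence the first index at which $a_k(1)$ fails to vanish coincides with the first index at which $V_k(1)$ fails to vanish, so the definition of multiplicity in terms of $a_k(1)$ translates verbatim into the statement of the proposition. The main obstacle is exactly this order-preservation step: one must verify that passing to the functional inverse neither raises nor lowers the order of the first nonvanishing term of the displacement, which is what the inductive identity $a_k(1)=-\tfrac1k V_k(1)$ secures.
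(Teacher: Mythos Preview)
Your proposal is correct and follows exactly the approach the paper indicates: it says only that ``a linear recursive formula for computing the multiplicity can be derived as in Section 2,'' and your argument reproduces the Section~2 derivation with the quartic right-hand side, picking up the additional $V_{k-3}$ term and then invoking the same inverse-Poincar\'e identity $a_k(1)=-\tfrac{1}{k}V_k(1)$ under the vanishing of earlier coefficients.
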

 Now, we consider the equation (1.4). In this case, the origin can not be a center. It is shown in \cite{A4} that $z=0$ is an isolated periodic solution. For a given class of coefficients the Gr\"{o}bner bases are computed until $G_{k}=\langle 1\rangle$; in this case the set of polynomials in $G_{k}$ do not have a common zero and therefore the maximum possible multiplicity is $k$. It is shown in \cite{A4} that $\mu_{1}(2,2)=8$, and it is shown in \cite{A2} that $\mu_{1}(2,3)=10$. We include proofs of these results also.
\begin{proof}
({\emph Theorem 1.2})
\begin{enumerate}
  \item
Let
  \[ B(t)=a+2\,bt+3\,c{t}^{2},\,A(t)=
d+2\,et+3\,f{t}^{2}\]
The Gr\"{o}bner basis is given by
\[G_{7}=\langle 108\,a-11\,{e}^{2},36\,b+11\,{e}^{2},54\,c-11\,{e}^{2},d+e,11\,{e}^{3
}-3240,f \rangle\]
and
\[\eta_{8}=-{\frac {11552}{626535}}\,{e}^{2}.\]
It follows that $G_{8}=\langle 1 \rangle $.

For the case piecewise linear coefficients, we take
\[ B(t)=at+b+\frac{1}{2}\, \left( c-a \right)  \left( t-\frac{1}{2}+ \left| t-\frac{1}{2} \right|
 \right),\]
\[ A(t)=dt+e+\frac{1}{2}\, \left( f-d \right)  \left( t-\frac{1}{2}+ \left| t-\frac{1}{2} \right|
 \right).\]
The basis $G_{7}$ and $\eta_{8}$ are given by
\[G_{7}= \langle 144\,a+7\,{f}^{2},576\,b-7\,{f}^{2},144\,c-7\,{f}^{2},-f+d,f+2\,e,-
27648+7\,{f}^{3} \rangle \]
\[ \eta_{8}=-{\frac {2041}{498960}}\,{f}^{2}\]
These imply that $G_{8}=\langle 1 \rangle $.
  \item
Let
  \[B(t)= a+2\,bt+3\,c{t}^{2},\, A(t)=
d+2\,et+3\,f{t}^{2}+4\,g{t}^{3}.\]
If the multiplicity is greater than $5$ then $\eta_{5}=0$, where
\[\eta_{5}={\frac {1}{1764}}\, \left( gc+210 \right)  \left( 2\,b+3\,c \right).\]
If $ gc+210=0$ then $G_{5}=\langle 1 \rangle $, and
if $2b+3c=0$ then $ G_{10}= \langle 1 \rangle $.\\
For piecewise linear coefficients, let
\[ B(t)=at+b+\frac{1}{2}\, \left( c-a \right)  \left( t-\frac{1}{2}+ \left| t-\frac{1}{2} \right|
 \right) \]
\[A(t)=dt+e+\frac{1}{2}\, \left( f-d \right)  \left( t-\frac{1}{3}+ \left| t-\frac{1}{3} \right|
 \right) +\frac{1}{2}\, \left( g-f \right)  \left( t-\frac{2}{3}+ \left| t-\frac{2}{3}
 \right|  \right).\]
The case that $c-a=0$,
 \[ G_{4}= \langle 6\,eb+fb+2\,db+81,c+2\,b,3\,f+18\,e+5\,d+g,2\,b+a \rangle \]
\[ G_{5}= \langle 1 \rangle. \]
When $c-a \neq 0$, we make the change of variables
\[z \mapsto \frac{1}{c-a}\,z. \]
With this transformation, the equation is of the form
\[ \dot{z}=k\,z^{4}+A(t)\,z^{3}+B(t)\,z^{2},\]
with a non-zero constant $k$. In fact, $k=\frac{1}{(c-a)^{3}}$.
Moreover, the coefficient $c-a$ in $B(t)$ is replaced by $1$. The basis, with $c-a=1$, is given by
\begin{gather*}
G_{9}= \langle 66735388183208154600960\,{f}^{4}+5829122567397869818848\,{f}^{3}-
68783721774316079552\,fe\\+139990051412348601632\,{f}^{2}+
1344723268054007200\,e+146583972817817393\,f,\\17101027722240\,e{f}^{2}-
5269542106752\,{f}^{3}+\\733610765888\,fe-233959770968\,{f}^{2}-
6125396800\,e+1693244383\,f,8\,eb-e,\\82432\,{e}^{2}-17024\,fe-1904\,{f}
^{2}+920\,e-269\,f,8\,fb-f,\\2\,g+6\,e+f,4\,a+8\,b+5,10368\,k+32\,e-11\,
f,f+6\,e+2\,d \rangle
\end{gather*}
\begin{gather*}
\eta_{10}=-{\frac {104057406529615499}{780994714281201717844377600}}\,{f}^{3}-\\{
\frac {867941774841100820209033}{96522790757355293736025622524723200}}
\,fe+\\{\frac {1666102537252531452515843}{
140396786556153154525128178217779200}}\,{f}^{2}+{\frac {
2196152718747243263819885}{206810570805324733709188812087754752}}\,e-\\{
\frac {2900881012686053802785600993}{
951328625704493775062268535603671859200}}\,f.
\end{gather*}
\[ G_{10}= \langle g,4\,a+8\,b+5,k,d,e,f \rangle \]
\item
If $B(t)$ is a linear function and the multiplicity is greater than $2$, then $B(t)=a-2\,a\,t$. Again with the change of variables
\[z\mapsto \frac{1}{a}\,z\]
We consider the differential equation
\[ \dot{z}=k \,z^{4}+A(t)\,z^{3}+B(t)\,z^{2},\]
with
  \[ B(t)= 2\,t-1,\, A(t)=
b+ct+d{t}^{2}+e{t}^{3}+f{t}^{4}+g{t}^{5}.\]
Here, $k=\frac{1}{a^{3}}$. The bases are given by
\begin{gather*}
G_{10}= \langle
37511692566915157189513021571250141246445562631168000\,a+\\
76954078025109319792419383346327250\,{g}^{4}f+\\
6849063961008228076202649414049259748983323392\,fg+\\
192385195062773299481048458365818125\,{g}^{5}+\\
17122659902520570190506623535123149372458308480\,{g}^{2},\\
812753338949828405772782134043753060339653857008640\,b-\\
3039686081991818131800565642179926375\,{g}^{4}+\\
6396626065376256805845080981938604767430030478336\,g,\\
12699270921091068840199720844433641567807091515760\,c+\\
2539854184218213768039944168886728313561418303152\,f+\\
423247429138101258858306608404799875\,{g}^{4}+\\
4341516136354618897336295790080428452939652335036\,g,\\
496186409615279857004140496974208217545576225280\,d-\\
595423691538335828404968596369049861054691470336\,f-\\
38477039012554659896209691673163625\,{g}^{4}-\\
1028512938658225137299402571203191057331286500736\,g,\\
744279614422919785506210745461312326318364337920\,e+\\
1488559228845839571012421490922624652636728675840\,f+\\
38477039012554659896209691673163625\,{g}^{4}+\\
2020885757888784851307683565151607492422438951296\,g,\\
455446932871017250994724610810483152793296813865452398540570112\,{f}^{
2}+\\2277234664355086254973623054052415763966484069327261992702850560\,f
g-\\49539000048633777493317082388741947622188883125\,{g}^{5}+\\
2845280986013026886327275526892991130747287056580695907322835840\,{g}^
{2},\\3984808136948929447103185415887785672672861696\,{g}^{3}-\\
14218838340357434859762303459719245720157600157874298880000+\\
38477039012554659896209691673163625\,{g}^{6} \rangle
\end{gather*}
When $g=0$,
\[ G_{9}= \langle k,b+c-e,d+2\,e,f \rangle \]
When $f=g=0$,
\[ G_{5}= \langle k,b+c-e,d+2\,e \rangle \]
When $e=f=g=0$,
\[ G_{5}= \langle k,b+c,d \rangle \]
In the corresponding piecewise linear coefficients, we take $B(t)=2\,t-1$ and
 \[ A(t)=at+b+\frac{1}{2}\, \left( c-a \right)  \left( t-\frac{1}{2}+ \left| t-\frac{1}{2} \right|
 \right) \]
\[ G_{4}= \langle 144+ ad+ 2\,ac,8\,c+d+3\,b \rangle, \eta_{5}=\frac{a}{6}. \]
When
\[A(t)=dt+e+\frac{1}{2}\, \left( f-d \right)  \left( t-\frac{1}{3}+ \left| t-\frac{1}{3} \right|
 \right) +\frac{1}{2}\, \left( g-f \right)  \left( t-\frac{2}{3}+ \left| t-\frac{2}{3}
 \right|  \right),\]
we have
\[ G_{4}= \langle ad+6\,ac+2\,ab-81,18\,c+3\,d+5\,b+e \rangle, \eta_{5}=\frac{2\,a}{9}. \]
When
\begin{gather*}
A(t)=at+b+\frac{1}{2}\, \left( c-a \right)  \left( t-\frac{1}{4}+ \left| t-\frac{1}{4} \right| \right)+\frac{1}{2}\, \left( d-c \right) \left (t-\frac{2}{4} +\left| t-\frac{2}{4} \right| \right) +\\\frac{1}{2} \,\left( e-d \right) \left(t-\frac{3}{4} +\left|t-\frac{3}{4} \right| \right),
\end{gather*}
we have
\begin{gather*}
 G_{8}= \langle 24301478794941\,a{d}^{2}-5170471968081920\,c-6997968367619776\,d,\\
348941857826215204551\,{d}^{3}-500251867306017904135800\,ad+\\
62937496133967787727964160,86251\,{a}^{2}-122304\,c-7280\,d,690\,ac-\\
519\,ad+125456,61144830207333600\,{c}^{2}-6622547301720987\,{d}^{2}+\\
3643254234403888940\,a,-302960854428691360\,a-280441397922231\,{d}^{2}
+\\3057241510366680\,dc,7841\,e-19320\,c+6691\,d,23523\,b+115712\,c+\\7261
\,d,116632\,c+23523\,f+6569\,d \rangle
\end{gather*}
and
\[ \eta_{9}={\frac {2426539331050271747}{36743835937950792000}}\,d-{\frac {
322603494879515897}{3674383593795079200}}\,c .\]
\[ G_{9}= \langle 1 \rangle \]
Finally, for the case
\begin{gather*}
 A(t)=at+b+\frac{1}{2}\, \left( c-a \right)  \left( t-\frac{1}{5}+ \left| t-\frac{1}{5} \right| \right)+\frac{1}{2}\, \left( d-c \right) \left (t-\frac{2}{5} +\left| t-\frac{2}{5} \right|  \right)+\\\frac{1}{2} \,\left( e-d \right) \left( t-\frac{3}{5} +\left| t-\frac{3}{5} \right|\right)  + \frac{1}{2}\, \left(f-e \right) \left(t-\frac{4}{5} + \left| t-\frac{4}{5} \right| \right)
 \end{gather*}
we make the change of variable $z \mapsto \frac{1}{a} \,z$ and let $B(t)=2\,t-1$. The basis is given by
\[ G_{10}=\langle k,e+10\,c+2\,d+2\,g,b-g,-d+f \rangle \]
\end{enumerate}
\end{proof}
\begin{rema}
 \begin{enumerate}
 \item
 To see the effect of connection points on multiplicity, we repeat the computations in Theorem 1.2(i) with many values of the connection points. The results are the same. In particular, $\mu_{2}92,2)=8$. As an example, we give the Gr\"{o}bner basis $G_{7}$ and  $\eta_{8}$ when the connection point is at $\frac{3}{7}$.
\[G_{7}=\langle81486729\,a+5324000\,{f}^{2},190135701\,b-3327500\,{f}^{2},6036054\,c
-166375\,{f}^{2},\]
\[9027\,d-6836\,f,21063\,e+8810\,f,2013137500\,{f}^{3}-
9344599297047\rangle,\]
\[\eta_{8}=-{\frac {652948208}{152171939367}}\,{f}^{2}.\]
\item
It follows from the proof of Theorem 1.2(i) that if $z=0$ has the maximum multiplicity then $A(t)$ is a linear function.
 The values of $b$ and $c$ when the multiplicity is $8$ imply that the parabola representing $B(t)$ has a vertex at $t=\frac{1}{2}$. The other similarities between polynomial coefficients and piecewise linear coefficients are: there is only one equation with multiplicity $8$, and when the multiplicity is $8$ the origin is unstable.
 \end{enumerate}
\end{rema}
Finally, we consider a case where $A(t)$ has two segments connected at any point inside the interval $[0,1]$.
\begin{theo}
 Consider the class of equations in which $B(t)$ is a linear function and $A(t)$ is a piecewise linear with two segments connected at a point $h$, with $0 \leq h \leq 1$. Then, $\mu_{2}(1,2)=5$.
\end{theo}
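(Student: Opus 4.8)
The plan is to reproduce the scheme of Proposition 3.1 and of the proof of Theorem 1.2(3), but carrying the connection point $h$ as a symbolic parameter from the outset. First I would normalize $B$. By Proposition 3.1, any solution of multiplicity at least $3$ satisfies $V_{2}(1)=-2\int_{0}^{1}B(s)\,ds=0$, so a linear $B(t)=pt+q$ with $B\not\equiv 0$ must take the form $B(t)=q(1-2t)$ with $q\neq 0$. The change of variable $z\mapsto z/q$ brings this to $B(t)=2t-1$ while turning the leading term into $\lambda z^{4}$ with a nonzero constant $\lambda=1/q^{3}$, so the equation becomes $\dot z=\lambda z^{4}+A(t)z^{3}+(2t-1)z^{2}$. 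The degenerate subcase $q=0$, i.e. $B\equiv 0$, is immediate: then $V_{2}\equiv 0$ and the recursion (3.1) gives $V_{4}(1)=-4\neq 0$, so the multiplicity there is at most $4$. For the main case I would write the two-segment coefficient as
\[
A(t)=at+b+\tfrac{1}{2}(c-a)\bigl(t-h+|t-h|\bigr),
\]
so that $A$ equals $at+b$ on $[0,h]$ and has slope $c$ on $[h,1]$, with free parameters $a,b,c,\lambda$.

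Next I would compute $V_{2}(1),V_{3}(1),V_{4}(1),V_{5}(1)$ from (3.1), splitting each integral at $s=h$ because $A$ changes formula there; this gives the $V_{n}(1)$ as polynomials in $a,b,c,\lambda$ whose coefficients are polynomials in $h$, with $V_{2}(1)=0$ automatic. Using Maple I would form the Gr\"obner basis $G_{4}$ of $\langle V_{3}(1),V_{4}(1)\rangle$ and reduce $V_{5}(1)$ modulo $G_{4}$ to obtain the leading obstruction $\eta_{5}$. This parallels the $h=\tfrac{1}{2}$ computation in the proof of Theorem 1.2(3), where $G_{4}=\langle 144+ad+2ac,\,8c+d+3b\rangle$ (with $d$ the quartic constant) and $\eta_{5}=a/6$. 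For the upper bound I must check that adjoining $\eta_{5}$ to $G_{4}$ yields the unit ideal for every admissible $h$: I expect $\eta_{5}$ to be a nonzero multiple of $a$, and setting $a=0$ in $G_{4}$ to leave a nonzero constant generator, so that $V_{2}(1)=\cdots=V_{5}(1)=0$ is inconsistent. By Proposition 3.1 this rules out multiplicity $\geq 6$ and gives $\mu_{2}(1,2)\leq 5$.

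For the matching lower bound I would exhibit, for each $h$, a point $(a,b,c,\lambda)$ with $\lambda\neq 0$ lying on the variety of $G_{4}$ but off the hypersurface $\eta_{5}=0$; there $V_{3}(1)=V_{4}(1)=0$ while $V_{5}(1)\neq 0$, so Proposition 3.1 certifies a periodic solution of multiplicity exactly $5$. Together with the upper bound this gives $\mu_{2}(1,2)=5$ for every connection point $h$.

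The main obstacle is precisely this $h$-dependence. For $h=\tfrac{1}{2}$ the obstruction $\eta_{5}=a/6$ is manifestly nonzero and the collapse $\langle V_{3}(1),V_{4}(1),V_{5}(1)\rangle=\langle 1\rangle$ is immediate, but once $h$ is symbolic the reduced polynomials carry $h$ in their coefficients, and I must verify that no special connection point makes $\eta_{5}$, or the relevant $S$-polynomial remainders, degenerate on the zero set of $G_{4}$. The likely difficulty is a finite set of exceptional values of $h$ — where an $h$-polynomial coefficient vanishes or a leading coefficient drops — which would then have to be substituted back and handled by a separate Gr\"obner basis computation to confirm that multiplicity $6$ stays impossible there as well.
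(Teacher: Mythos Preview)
Your plan is sound in outline, but the obstacle you flag at the end --- the $h$-dependence of the reduced polynomials and the possible exceptional connection points --- is genuine, and you leave it unresolved. The paper avoids this difficulty altogether by a different organizational choice: instead of treating $h$ as a parameter in the coefficient field and running a parametric Gr\"obner computation, it treats $h$ as a polynomial variable on the same footing as the coefficients of $A$ and $B$.

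Concretely, the paper does not normalize $B$; it writes $B(t)=2at-a$ and keeps the quartic coefficient fixed at $1$, so the unknowns are $a,b,c,d,h$ (with $b,c,d$ the parameters of the two-segment $A$). It then computes $\eta_{3},\eta_{4},\eta_{5}$ via the integral formulae from \cite{A4} and takes the Gr\"obner basis of $\langle\eta_{3},\eta_{4},\eta_{5}\rangle$ in $\mathbb{Q}[a,b,c,d,h]$. One member of the resulting $G_{5}$ is the univariate polynomial $3h^{2}-3h+1$, whose discriminant is $-3<0$. Thus the system $\eta_{3}=\eta_{4}=\eta_{5}=0$ has no real solution whatsoever, and in particular none with $h\in[0,1]$; multiplicity $\geq 6$ is ruled out uniformly in $h$ by a single computation.

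So the paper trades your parametric scheme (with its attendant case-by-case analysis over special $h$) for one Gr\"obner computation in one extra variable, letting elimination do the work. Your route could be pushed through, but only after tracking down and separately disposing of each degenerate $h$; the paper's route compresses all of that into the observation that a certain quadratic in $h$ has no real root.
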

\begin{proof}
We put $\eta_{2}=0$, and then take
\[B(t)=2\,at-a.\]
The function $A(t)$ is written as $A(t)=b\,t+c$ when $0 \leq t \leq h$, and $A(t)=d\,t+b\,h+c-d\,h$ when $h \leq t \leq 1$.
To compute $\eta_{3},\eta_{4},\eta_{5}$, we use the following formulae derived in \cite{A4}.
\[ \eta_{3}=\int_{0}^{1} A(t) \,dt,\]
\[ \eta_{4}=\int_{0}^{1} [A(t)\, \bar{B}(t)+1]\,dt,\]
\[ \eta_{5}=\int_{0}^{1} [A(t)\,(\bar{B}(t))^{2}+2\,\bar{B}(t)] \,dt,\]
where $\bar{B}(t)=\int_{0}^{t} B(s)\,ds$. The Gr\"{o}bner basis $G_{5}$ is given by
\[G_{5}=\langle ab-ad-108,36\,{c}^{2}+6\,cd+30\,bc+bd+7\,{b}^{2}+{d}^{2},2\,ac+108\,h
+ad+36,\]
\[3\,bh-3\,dh+b+6\,c+2\,d,-5\,d-24\,c+9\,dh+18\,ch-7\,b,3\,{h}^{2
}+1-3\,h \rangle.\]
It is clear that the last polynomial does not have a real solution. Hence, $\mu_{2}(1,2)=5$.
\end{proof}

\bibliographystyle{amsplain}

\end{document}